\documentclass[letterpaper, 10pt, conference]{ieeeconf}  

\IEEEoverridecommandlockouts                              

\overrideIEEEmargins                                      



\hyphenation{op-tical net-works semi-conduc-tor}

\usepackage[pass]{geometry}
\usepackage{fancyhdr,todo}
\usepackage{amsmath}
\usepackage{amsfonts,amssymb,float,graphicx}

\usepackage{amsthm}

\usepackage{subfigure}
\usepackage[usenames]{color}
\usepackage{pifont}

\usepackage{multirow}
\usepackage{hhline}

\usepackage{listings}
\usepackage{color} 
\definecolor{mygreen}{RGB}{28,172,0} 
\definecolor{mylilas}{RGB}{170,55,241}

\usepackage{algpseudocode,algorithm,algorithmicx}

\theoremstyle{plain}
\newtheorem{theorem}{Theorem}[section] 

\theoremstyle{definition}
\newtheorem{assumption}[theorem]{Assumption}
\newtheorem{lemma}[theorem]{Lemma}
\newtheorem{remark}[theorem]{Remark}

\usepackage[font=footnotesize,labelfont=bf]{caption}

\usepackage{cite}



\title{\LARGE \bf
A Distributed Online Convex Optimization Algorithm  \\with Improved Dynamic Regret
}

\author{Yan Zhang, Robert J. Ravier, Michael M. Zavlanos, Vahid Tarokh
\thanks{Yan Zhang and Michael M. Zavlanos are with the Department of Mechanical Engineering and Materials Science, Duke University, Durham, NC 27708, USA. {\tt\small \{yan.zhang2,michael.zavlanos\}@duke.edu} Robert Ravier and Vahid Tarokh are with the Department of Electrical and Computer Engineering, Duke University, Durham, NC 27708, USA. {\tt\small \{robert.ravier,vahid.tarokh\}@duke.edu} This work is supported in part by AFOSR under award \# FA9550-19-1-0169 and by DARPA under grant \# FA8650-18-1-7837.}}


\makeatletter
\newcommand\fs@spaceruled{\def\@fs@cfont{\bfseries}\let\@fs@capt\floatc@ruled
	\def\@fs@pre{\vspace{1.5\baselineskip}\hrule height.8pt depth0pt \kern2pt}%
	\def\@fs@post{\kern2pt\hrule\relax}%
	\def\@fs@mid{\kern2pt\hrule\kern2pt}%
	\let\@fs@iftopcapt\iftrue}
\makeatother

\begin{document}

\maketitle

\thispagestyle{empty}
\pagestyle{empty}

\begin{abstract}
In this paper, we consider the problem of distributed online convex optimization, where a network of local agents aim to jointly optimize a convex function over a period of multiple time steps. The agents do not have any information about the future. Existing algorithms have established dynamic regret bounds that have explicit dependence on the number of time steps. In this work, we show that we can remove this dependence assuming that the local objective functions are strongly convex. More precisely, we propose a gradient tracking algorithm where agents jointly communicate and descend based on corrected gradient steps. We verify our theoretical results through numerical experiments.
\end{abstract}

\section{INTRODUCTION}
\label{sec:intro}
Distributed optimization has recently received considerable attention, particularly due to its wide applicability in the areas of control and learning \cite{rabbat2004distributed, ram2009distributed, shahrampour2016distributed, nedic2017fast}. 
The goal is to decompose large optimization problems into smaller, more manageable subproblems that are solved iteratively and in parallel by a group of communicating agents. As such, distributed algorithms avoid the cost and fragility associated with centralized coordination, and provide better privacy for the autonomous decision makers.

In this paper, we are concerned with distributed online convex optimization (OCO) problems, where the objective function can vary with time and the goal is to minimize a notion of {\emph{regret}}. Most effort in OCO has been devoted to analyzing algorithms using {\emph{static}} regret, which compares the computed candidate optima with the best fixed action in hindsight. 
It is well known that the static regret of distributed OCO algorithms grows sublinearly with the iterations; see \cite{zinkevich2003online, hazan2007logarithmic, hosseini2013online, akbari2015distributed, lee2017sublinear} for examples.
Our focus is on {\emph{dynamic}}  regret, which compares the output of the algorithm at each time with the current optimal value at that time. 
Dynamic regret is of more interest when the goal is to track a sequence of varying optimal solutions, as in  distributed tracking of moving targets.
It is well known that, in general, sublinear dynamic regret cannot be achieved even in the centralized case; bounds on dynamic regret 
are related to problem regularities,
which are measures of how the individual optimization problems change over time \cite{hall2015online, jadbabaie2015online, zhang2017improved}. 
The distributed problem has been analyzed in \cite{shahrampour2018distributed,nazari2019dadam}. The work in \cite{shahrampour2018distributed} makes use of an online version of mirror descent, while \cite{nazari2019dadam} uses a distributed variant of the celebrated ADAM algorithm \cite{duchi2011adaptive}. In both cases, the bounds achieved depend on the path-length regularity as well as the total number of time steps.

The dependency of dynamic regret on the number of time steps was removed in \cite{mokhtari2016online} by assuming the objective function to be strongly convex, and then in \cite{zhang2017improved} under weaker conditions than strong convexity, both in the centralized case.
The key idea is that with sufficient assumptions, the standard offline algorithms can be shown to exhibit linear convergence provided a small enough step size is used. 
In this paper, we show that analogous results can hold in the distributed setting by employing the gradient tracking technique in \cite{pu2018push}. Our main result shows that strong convexity allows us to eliminate the explicit dependence on time in the regret bound. The algorithm that we propose is an online modification of those featured in \cite{qu2018harnessing,pu2018push}, which themselves are corrected versions of decentralized gradient descent \cite{nedic2009distributed}. 
In the process, we use another regularity in OCO for variations in the gradient. 

The remainder of the work is organized as follows. In Section \ref{sec:pf}, we formulate the problem and present assumptions necessary for the subsequent analysis. In Section \ref{sec:convergence}, we present our proposed algorithm and analyze its dynamic regret bounds. In Section \ref{sec:simulation} we perform numerical experiments to evaluate the quality of our bounds and algorithm. We make concluding remarks in Section \ref{sec:conclusion}.

\section{Review and Preliminaries}
\label{sec:pf}
We briefly review necessary background for the distributed OCO problem at hand. At each time $t$, we are interested in optimizing a differentiable convex function $f_{t}(x):= \sum_{i=1}^{n} f_{i,t}(x),$ where $n$ is the size of the network and $\{f_{i,t}\}$ are differentiable convex functions corresponding to each agent $i$ in a connected network. Let each agent keep a local estimate $x_{i,t}$ of the global optimizer $x_t^\ast$ at $t$. At time $t = 0$, the agents are initialized with starting points $x_{i,0}$ and the gradients $\nabla f_{i,0}(x_{i,0})$ are observed locally. Though many notions of regret exist, there are two notions that have historically been most popular in the OCO literature. The static regret is a measurement of performance with respect to the best fixed decision in hindsight and is given in the distributed setting by
\begin{equation}\label{eqn:static}
R_{T}^{s} := \frac{1}{n} \sum_{i=1}^{n} \sum_{t=0}^{T} f_{t}(x_{i,t}) - \min_{x} \sum_{t=1}^{T} f_{t}(x).
\end{equation}

\noindent The focus of our paper is on dynamic regret. In this measure, the performance of the algorithm is measured with respect to the best decision at each time. Formally, it is defined as
\begin{equation}\label{eqn:dynamic}
R_{T}^{d} := \frac{1}{n} \sum_{i=1}^{n} \sum_{t=0}^{T} f_{t}(x_{i,t}) - \sum_{t=1}^{T} \min_{x_{t}}  f_{t}(x_{t}).
\end{equation}

\noindent Unlike the static regret $R_T^s$ that can be shown to grow sublinearly with $T$, the dynamic regret $R_T^d$ depends on problem regularities.
The most common regularity is the optimizer path-length, which is the sum of the distance between the individual optimal points at each time, i.e.,
\begin{equation} \label{eqn:PathLength}
\mathcal{P}_{T}:= \sum_{t=0}^{T-1} \|{x_{t+1}^{\ast}-x_{t}^\ast}\|,
\end{equation}
\noindent where the norm is in the $L_{2}$-sense. The distributed online algorithms in \cite{shahrampour2018distributed, nazari2019dadam} both have regret that is asymptotically $O(\sqrt{T} \mathcal{P}_T)$.
In this paper, we will also need what we call the gradient path-length. Let $g_t(x_t) = [\nabla f_{1, t}(x_{1,t}), \dots, \nabla f_{n, t}(x_{n,t})]^T$. Then, the gradient path-length is given by
\begin{equation} \label{eqn:GradPathLength}
\mathcal{V}_{T}:= \sum_{t=0}^{T-1} \|{g_{t+1} - g_{t}}\|_{\infty},
\end{equation}
\noindent where $\| \cdot \|_{\infty}$ denotes the $L_{\infty}$ norm. A similar squared version of $\mathcal{V}_{T}$ known as the gradient variation has appeared in regret bounds before in \cite{chiang2012online, jadbabaie2015online}.
We also make the following assumptions on the local objective function $f_{i,t}$ at each time.

\begin{assumption}
	\label{assum:lipschitz_f}
	For all $i$ and $t$, the function $f_{i,t}$ is $L_f-$Lipschitz, that is, there exists a constant $L_{f} > 0$ such that
	\begin{equation*}
		| f_{i,t} (x) - f_{i,t} (y) | \leq L_f \|x - y\|, \text{ for all } x, y.
	\end{equation*}
\end{assumption}

\begin{assumption}
	\label{assum:LSmooth}
	For all $i$ and $t,$ the function $f_{i,t}$ is $L_{g}$-smooth, i.e. there exists a constant $L_{g} > 0$ such that 
	\begin{equation*}
		\| \nabla f_{i,t} (x) - \nabla  f_{i,t} (y) \| \leq L_g \|x - y\|, \text{ for all } x, y.
	\end{equation*}
\end{assumption}
As previously discussed, we will also make use of the additional assumption of strong convexity on $f_{t}(x)$.
\begin{assumption}
	\label{assum:StrongConvex}
	For all $t,$ the global objective function $f_{t}$ is $\mu$-strongly convex.
\end{assumption}

\noindent Strong convexity is an assumption used in offline convex optimization to prove linear convergence of gradient descent. This is expressed formally in the following result shown in \cite{qu2018harnessing}, which we state without proof.
\begin{lemma}
	\label{lemma:GradConv}
	Let $F(x)$ be $\mu$-strongly convex and its gradient be $\beta$-Lipschitz continuous. Moreover, let $x_{0} \in \mathbb{R}^{d}$ and $0 < \eta < \frac{2}{\beta}$, and define $x_{1} := x_{0} - \eta \nabla F(x).$ Then, for $\lambda:= \max (|1- \eta \mu|,|1-\eta \beta|) < 1,$ we have $\| x_{1}-x^{\ast} \| \leq \lambda \| x_{0} - x^{\ast} \|$.
\end{lemma}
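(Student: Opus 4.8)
The plan is to show that a single gradient step acts as a contraction toward the unique minimizer $x^{\ast}$. First I would record the basic reduction. Since $F$ is $\mu$-strongly convex it has a unique minimizer $x^{\ast}$, and because $F$ is differentiable this minimizer satisfies $\nabla F(x^{\ast}) = 0$. Reading the update as $x_{1} := x_{0} - \eta \nabla F(x_{0})$ (the $\nabla F(x)$ in the statement should be evaluated at $x_{0}$), I would subtract $x^{\ast}$ and insert the optimality condition to write $x_{1} - x^{\ast} = (x_{0} - x^{\ast}) - \eta\bigl(\nabla F(x_{0}) - \nabla F(x^{\ast})\bigr)$. The whole problem then reduces to controlling the map sending $x_{0} - x^{\ast}$ to $x_{1} - x^{\ast}$.

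The main step is to express the gradient difference through an averaged Hessian. Assuming $F$ is twice differentiable, the fundamental theorem of calculus along the segment from $x^{\ast}$ to $x_{0}$ gives $\nabla F(x_{0}) - \nabla F(x^{\ast}) = H (x_{0} - x^{\ast})$, where $H := \int_{0}^{1} \nabla^{2} F\bigl(x^{\ast} + s(x_{0} - x^{\ast})\bigr)\,ds$ is symmetric. Hence $x_{1} - x^{\ast} = (I - \eta H)(x_{0} - x^{\ast})$. The hypotheses pin down the spectrum of $H$: $\mu$-strong convexity and $\beta$-smoothness force $\mu I \preceq \nabla^{2} F \preceq \beta I$ pointwise, so the same bounds hold for the average $H$, and the eigenvalues of $I - \eta H$ lie in $[\,1-\eta\beta,\,1-\eta\mu\,]$. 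Therefore $\|I - \eta H\|_{2} \le \max(|1-\eta\mu|,|1-\eta\beta|) = \lambda$, which yields $\|x_{1} - x^{\ast}\| \le \lambda \|x_{0} - x^{\ast}\|$. To finish I would confirm $\lambda < 1$: from $0 < \eta < 2/\beta$ one gets $|1-\eta\beta| < 1$, and since $\mu \le \beta$ gives $\eta < 2/\beta \le 2/\mu$, one also gets $|1-\eta\mu| < 1$.

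The hard part is the regularity needed for the integrated-Hessian step: the $\beta$-Lipschitz gradient hypothesis only guarantees that $\nabla F$ is differentiable almost everywhere, so $\nabla^{2} F$ need not exist everywhere. If one insists on only $C^{1}$ regularity, I would instead expand $\|x_{1} - x^{\ast}\|^{2} = \|u\|^{2} - 2\eta\langle v, u\rangle + \eta^{2}\|v\|^{2}$, with $u := x_{0} - x^{\ast}$ and $v := \nabla F(x_{0}) - \nabla F(x^{\ast})$, and bound the right-hand side using the tight two-point inequality for $\mu$-strongly convex, $\beta$-smooth functions, namely $\langle v, u\rangle \ge \tfrac{\mu\beta}{\mu+\beta}\|u\|^{2} + \tfrac{1}{\mu+\beta}\|v\|^{2}$, together with $\mu\|u\| \le \|v\| \le \beta\|u\|$. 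Substituting the lower bound for $\langle v,u\rangle$ leaves a multiple of $\|v\|^{2}$ with coefficient $\eta\bigl(\eta - \tfrac{2}{\mu+\beta}\bigr)$; one then optimizes over $\|v\| \in [\mu\|u\|, \beta\|u\|]$, taking $\|v\| = \mu\|u\|$ when $\eta \le \tfrac{2}{\mu+\beta}$ and $\|v\| = \beta\|u\|$ otherwise. A short computation shows these two cases return exactly $(1-\eta\mu)^{2}\|u\|^{2}$ and $(1-\eta\beta)^{2}\|u\|^{2}$, i.e. $\lambda^{2}\|u\|^{2}$, since $\lambda = 1-\eta\mu$ on the first range and $\lambda = |1-\eta\beta|$ on the second. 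I expect this case split, rather than any single inequality, to be the main bookkeeping obstacle, and it is presumably why the cited reference states the result without proof.
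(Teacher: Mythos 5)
Your proof is correct. Note that the paper itself gives no argument here --- it states the lemma and defers to the reference --- so the only question is whether your proposal stands on its own, and it does. Your first route (the averaged Hessian $H=\int_0^1\nabla^2F(x^\ast+s(x_0-x^\ast))\,ds$ with $\mu I\preceq H\preceq\beta I$) is the cleanest way to see where $\lambda=\max(|1-\eta\mu|,|1-\eta\beta|)$ comes from, but as you say it needs $F\in C^2$, which the hypotheses do not grant. Your second route closes that gap and is essentially the standard textbook proof (Nesterov's two-point inequality $\langle v,u\rangle\ge\tfrac{\mu\beta}{\mu+\beta}\|u\|^2+\tfrac{1}{\mu+\beta}\|v\|^2$ plus $\mu\|u\|\le\|v\|\le\beta\|u\|$): the coefficient of $\|v\|^2$ after substitution is $\eta(\eta-\tfrac{2}{\mu+\beta})$, so taking $\|v\|=\mu\|u\|$ when it is nonpositive and $\|v\|=\beta\|u\|$ when it is positive gives exactly $(1-\eta\mu)^2\|u\|^2$ and $(1-\eta\beta)^2\|u\|^2$ respectively, and the threshold $\eta=\tfrac{2}{\mu+\beta}$ is precisely where the max in $\lambda$ switches between $|1-\eta\mu|$ and $|1-\eta\beta|$, so both cases return $\lambda^2\|u\|^2$. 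Your reading of the statement's typo ($\nabla F(x)$ should be $\nabla F(x_0)$) and the check that $\lambda<1$ under $0<\eta<2/\beta$ (using $\mu\le\beta$) are both right.
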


Furthermore, we introduce notations and an assumption on the network structure. Denote the graph as $\mathcal{G} = (\mathcal{V}, \mathcal{E})$, where $\mathcal{V}$ is the agent index set $\{1, \dots, n\}$ and $\mathcal{E}$ is the edge set. $(i, j) \in \mathcal{E}$ if agent $i$ can receive information from agent $j$. We define a weight matrix $W$ that encodes the topology of graph $\mathcal{G}$. Specifically, if $W_{ij} \neq 0$, edge $(i, j) \in \mathcal{E}$ and vice versa. We make the following assumption on graph $\mathcal{G}$ and matrix $W$.
\begin{assumption}
	\label{assum:doubly_stochastic}
	The graph $\mathcal{G}$ is undirected and connected. Furthermore, $W$ is doubly stochastic and symmetric. That is, $W \mathbf{1} = \mathbf{1}$ and $W = W^T$.
\end{assumption}
A direct consequence of Assumption~\ref{assum:doubly_stochastic} is that the $L_2$ matrix norm $\|W - \frac{1}{n}\mathbf{1}\mathbf{1}^T\| = \sigma_W < 1$, where $\sigma_W$ is
also called the mixing rate. When $\sigma_W$ is small, the network reaches consensus faster, see \cite{johansson2008subgradient}.

\section{Algorithm and Performance Analysis}
\label{sec:convergence}
\floatstyle{spaceruled}
\restylefloat{algorithm}
\begin{subequations}
	\label{eq:eq_oco}
\begin{algorithm}[t]
	\caption{Distributed Online Optimization with Gradient Tracking}
	\label{alg:d-oco}
	\begin{algorithmic}[1]
		\Require{The primal variable $x_{i,0}$, the local gradient $\nabla f_{i,0}(x_{i,0})$ and global gradient estimate $y_{i,0} = \nabla f_{i,0}(x_{i,0})$ for all $i$, and the doubly stochastic matrix $W := (w_{i,j}).$}
		\For{$1 \leq t \leq T$}
		\State{ For all $i = 1, \dots, n$, agent $i$ computes
			\begin{equation}
				\label{eq:eq_oco_x}
				x_{i,t+1} = \sum_{j \in N_i} W_{ij} (x_{j,t}- \alpha_j y_{j,t}).
			\end{equation}
		}
		\State{ For all $i = 1, \dots, n$, agent $i$ computes
			\begin{equation}
				\label{eq:eq_oco_y}
				\begin{split}
				y_{i,t+1} = \sum_{j \in N_i} W_{ij} y_{j,t} & + \nabla f_{i, t+1}(x_{i,t+1})  \\
				& - \nabla f_{i, t}(x_{i,t}).
				\end{split}
		\end{equation}}
		\EndFor
	\end{algorithmic}
\end{algorithm}
\end{subequations}

Our proposed algorithm is detailed in Algorithm \ref{alg:d-oco}. Without loss of generality, we suppose that the iterates in Algorithm~\ref{alg:d-oco} satisfy $x_{i,t}, y_{i,t} \in \mathbb{R}$. Let $\alpha = \mathtt{diag}(\alpha_1, \dots, \alpha_n)$. The update in \eqref{eq:eq_oco} can be written in a compact form
\begin{subequations}
	\label{eq:x_y_update}
	\begin{equation}
		\label{eq:x_update}
		x_{t+1} = W(x_t - \alpha y_t),
	\end{equation}
	\begin{equation}
\label{eq:y_update}
		y_{t+1} = Wy_t + g_{t+1}(x_{t+1}) - g_t(x_t).
\end{equation}	
\end{subequations}

\noindent The updates \eqref{eq:x_update} and \eqref{eq:y_update} are an online analogue to the algorithm proposed in \cite{pu2018push}. We start by assuming that the agents have access to the local gradient at the present. The algorithm proceeds by alternating between descent and gradient computation steps, simultaneously communicating with other agents. Rather than performing standard distributed gradient descent, we employ a correction that corrects for the difference in the gradient between the current points and those in the immediate past. This technique is shown in \cite{pu2018push} to improve the convergence rates for the offline problem. Denote by $\bar{x}_t = \frac{1}{n} \sum_i x_{i, t}$, $\bar{y}_t = \frac{1}{n} \sum_i y_{i, t}$ the average values of the primal variables and the gradients across the network. We have the following lemma.
\begin{lemma}
	\label{lem:regret_bound}
	Let Assumption~\ref{assum:lipschitz_f} hold. Then, the regret $R_T^d$ is upper bounded by
	\begin{equation*}
		R_T^d \leq \sqrt{n}L_f \sum_{t=0}^T \|x_t - \mathbf{1}\bar{x}_t\| + nL_f	\sum_{t=0}^{T} \|\bar{x}_t - x_t^\star\|.
	\end{equation*}
\end{lemma}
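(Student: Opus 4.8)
The plan is to bound the per-step contribution to the regret and then sum over $t$. First I would note that, although Assumption~\ref{assum:lipschitz_f} is stated for each local function $f_{i,t}$, the global objective $f_t = \sum_{i=1}^n f_{i,t}$ is $nL_f$-Lipschitz: by the triangle inequality, $|f_t(x)-f_t(y)| \le \sum_{i=1}^n |f_{i,t}(x)-f_{i,t}(y)| \le nL_f\|x-y\|$. I would also use that $\min_{x_t} f_t(x_t) = f_t(x_t^\star)$ by the definition of $x_t^\star$, so that each optimal-value term in \eqref{eqn:dynamic} is simply $f_t(x_t^\star)$.

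The central step is a per-time decomposition that inserts the network average $\bar x_t$ as an intermediate point. For each $t$,
\begin{equation*}
\frac{1}{n}\sum_{i=1}^n f_t(x_{i,t}) - f_t(x_t^\star) = \frac{1}{n}\sum_{i=1}^n\big(f_t(x_{i,t}) - f_t(\bar x_t)\big) + \big(f_t(\bar x_t) - f_t(x_t^\star)\big).
\end{equation*}
I would bound the two groups separately using the $nL_f$-Lipschitz property. The second group is immediate: $f_t(\bar x_t) - f_t(x_t^\star) \le nL_f|\bar x_t - x_t^\star| = nL_f\|\bar x_t - x_t^\star\|$, which is exactly the second summand of the claimed bound (here $\bar x_t$ and $x_t^\star$ are scalars, so the absolute value and the $L_2$ norm coincide). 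For the first group, Lipschitzness gives $\tfrac{1}{n}\sum_i \big(f_t(x_{i,t})-f_t(\bar x_t)\big) \le L_f\sum_{i=1}^n |x_{i,t}-\bar x_t|$, i.e.\ an $\ell_1$ norm of the consensus-error vector $x_t - \mathbf{1}\bar x_t$.

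To match the $\ell_2$ quantity $\|x_t - \mathbf{1}\bar x_t\|$ in the statement, I would apply Cauchy--Schwarz, $\sum_{i=1}^n |x_{i,t}-\bar x_t| \le \sqrt{n}\,\|x_t - \mathbf{1}\bar x_t\|$, turning the first group into $\sqrt{n}L_f\|x_t - \mathbf{1}\bar x_t\|$. Summing the resulting per-step estimate over $t$ and recognizing the regret definition \eqref{eqn:dynamic} then yields the claimed inequality.

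I do not expect a serious obstacle; the result is essentially a termwise application of Lipschitz estimates followed by a summation. The two points requiring care are the constants---the global Lipschitz constant is $nL_f$ rather than $L_f$, and the $\sqrt{n}$ factor arises solely from the $\ell_1$-to-$\ell_2$ conversion via Cauchy--Schwarz---and the fact that the averaged and optimal sums in \eqref{eqn:dynamic} are printed with slightly different index ranges; I would treat these as aligned (a typographical matter), since with matching ranges the per-step bound sums directly to the stated inequality.
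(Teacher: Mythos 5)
Your proposal is correct and follows essentially the same route as the paper: both exploit that $f_t$ is $nL_f$-Lipschitz, insert the network average $\bar x_t$ as an intermediate point (you do it at the level of function values, the paper inside the norm $\|x_{i,t}-x_t^\star\|$ after applying Lipschitzness---an immaterial reordering), and convert $\sum_i\|x_{i,t}-\bar x_t\|$ to $\sqrt{n}\,\|x_t-\mathbf{1}\bar x_t\|$ via Cauchy--Schwarz (the paper's ``RMS--AM'' inequality). Your handling of the index-range mismatch in \eqref{eqn:dynamic} as typographical matches the paper's implicit treatment.
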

\begin{proof}
	Given Assumption~\ref{assum:lipschitz_f}, for all $i$ and $t$, we have that $ f_t(x_{i,t}) - f_t(x_t^\star) \leq nL_f \|x_{i,t} - x_t^\star\| $. Summing both sides over $i$ and $t$ and dividing by $n$, we have that
	\begin{equation}
		\label{eq:bd_regret}
		R_T^d \leq L_f \sum_{t = 0}^{T} \sum_{i=1}^n \|x_{i,t} - x_t^\star\|.
	\end{equation}
	Adding and subtracting $\bar{x}_t$ inside the norm in \eqref{eq:bd_regret} and applying the triangle inequality, we have that
	\begin{equation*}
		\label{eq:bd_regret_1}
		\begin{split}
		R_T^d & \leq L_f \sum_{t=0}^T \sum_{i=1}^{n} \|x_{i,t} - \bar{x}_t\| + nL_f	\sum_{t=0}^{T} \|\bar{x}_t - x_t^\star\| \\
		& \leq \sqrt{n}L_f \sum_{t=0}^T \|x_t - \mathbf{1}\bar{x}_t\| + nL_f	\sum_{t=0}^{T} \|\bar{x}_t - x_t^\star\|.
		\end{split}
	\end{equation*}
	The second inequality is due to the inequality of the root mean square and the arithmetic mean. The proof is complete. \quad
\end{proof}
From Lemma~\ref{lem:regret_bound}, we observe that the regret is upper bounded by the network error $\sum_{t=0}^T \|x_t - \mathbf{1}\bar{x}_t\|$ and the tracking error $\sum_{t=0}^{T} \|\bar{x}_t - x_t^\star\|$. Next, we shall show that under an appropriate step size, both errors are bounded by the regularity of the problem up to a scaling factor. Before presenting this result, we need the following lemma
\begin{lemma}
	\label{lem:conserve_y}
	Let Assumption~\ref{assum:doubly_stochastic} hold. Then, for all $t$, we have $\mathbf{1}^T y_t = \mathbf{1}^T g_t(x_t)$.
\end{lemma}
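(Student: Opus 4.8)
The plan is to prove this by induction on $t$, exploiting the fact that the gradient tracking update \eqref{eq:y_update} is structured precisely so that the aggregate $\mathbf{1}^T y_t$ evolves by exactly tracking the change in the aggregate gradient $\mathbf{1}^T g_t(x_t)$. The single structural ingredient I need is that left-multiplication by $\mathbf{1}^T$ leaves the mixing step invariant: since $W$ is symmetric and doubly stochastic by Assumption~\ref{assum:doubly_stochastic}, we have $\mathbf{1}^T W = (W^T \mathbf{1})^T = (W\mathbf{1})^T = \mathbf{1}^T$, so the consensus averaging contributes nothing when we sum across agents.

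For the base case $t = 0$, I would simply invoke the initialization in Algorithm~\ref{alg:d-oco}, namely $y_{i,0} = \nabla f_{i,0}(x_{i,0})$ for all $i$, which in stacked form reads $y_0 = g_0(x_0)$. Taking $\mathbf{1}^T$ of both sides gives $\mathbf{1}^T y_0 = \mathbf{1}^T g_0(x_0)$ immediately.

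For the inductive step, assume $\mathbf{1}^T y_t = \mathbf{1}^T g_t(x_t)$. I would left-multiply the update \eqref{eq:y_update} by $\mathbf{1}^T$ to obtain
\begin{equation*}
	\mathbf{1}^T y_{t+1} = \mathbf{1}^T W y_t + \mathbf{1}^T g_{t+1}(x_{t+1}) - \mathbf{1}^T g_t(x_t).
\end{equation*}
Applying $\mathbf{1}^T W = \mathbf{1}^T$ to the first term collapses it to $\mathbf{1}^T y_t$, and then the inductive hypothesis replaces $\mathbf{1}^T y_t$ with $\mathbf{1}^T g_t(x_t)$. The resulting $+\,\mathbf{1}^T g_t(x_t)$ and $-\,\mathbf{1}^T g_t(x_t)$ cancel, leaving exactly $\mathbf{1}^T y_{t+1} = \mathbf{1}^T g_{t+1}(x_{t+1})$, which closes the induction.

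I do not anticipate any genuine obstacle here: this is the standard conservation invariant underlying all gradient tracking schemes, and every step is an identity rather than an estimate. The only point that warrants care is confirming that the initialization really gives $y_0 = g_0(x_0)$ exactly (so the base case holds with equality, not merely approximately) and that the doubly stochastic and symmetric hypotheses are both used to justify $\mathbf{1}^T W = \mathbf{1}^T$; symmetry lets me pass from $W\mathbf{1}=\mathbf{1}$ to the left eigenvector statement cleanly, though double stochasticity alone would also suffice.
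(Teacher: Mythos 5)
Your proof is correct and follows essentially the same route as the paper's: induction on $t$ with the base case from the initialization $y_{i,0} = \nabla f_{i,0}(x_{i,0})$, and the inductive step obtained by left-multiplying \eqref{eq:y_update} by $\mathbf{1}^T$ and using $\mathbf{1}^T W = \mathbf{1}^T$ from Assumption~\ref{assum:doubly_stochastic} to cancel the telescoping terms. No gaps.
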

\begin{proof}
	According to the initialization of Algorithm~\ref{alg:d-oco}, $y_{i,0} = \nabla f_{i,0}(x_{i,0})$, the above equality is satisfied when $t = 0$. Next, we use mathematical induction: Assuming that $\mathbf{1}^T y_t = \mathbf{1}^T g_t(x_t)$, we need to show $\mathbf{1}^T y_{t+1} = \mathbf{1}^T g_{t+1}(x_{t+1})$. Recalling the update in \eqref{eq:y_update}, we have that $ \mathbf{1}^T y_{t+1} =  \mathbf{1}^T Wy_t +  \mathbf{1}^T g_{t+1}(x_{t+1}) -  \mathbf{1}^T g_t(x_t) =  \mathbf{1}^T y_t +  \mathbf{1}^T g_{t+1}(x_{t+1}) -  \mathbf{1}^T g_t(x_t) = \mathbf{1}^T g_{t+1}(x_{t+1})$. The proof is complete.
\end{proof}

\begin{lemma}
	\label{lem:bound_by_regularity}
	Let Assumptions \ref{assum:LSmooth}, \ref{assum:StrongConvex} and \ref{assum:doubly_stochastic} hold. Then, there exist small enough step sizes $[\alpha_1, \dots, \alpha_n]$ such that $\sum_{t=0}^T\|\bar{x}_t -  x_t^\star\|$, $\sum_{t=0}^T \|x_t - \mathbf{1}\bar{x}_t\|$ and $\sum_{t=0}^T \|y_t - \mathbf{1}\bar{y}_t\|$ are all of order $O(C_1 + C_2 + C_3 + \mathcal{P}_T + \mathcal{V}_T)$, where $C_1 = \|\bar{x}_0 - x_0^\star\|$, $C_2 = \|x_0 - \mathbf{1}\bar{x}_0\|$ and $C_3 = \|y_0 - \mathbf{1}\bar{y}_0\|$. Furthermore, if $\alpha_1=...=\alpha_n$, then the step sizes can be chosen as $\alpha_i < \frac{1-\sigma_W^2}{1 - \sigma_W^2 + 2\sigma_W} \frac{1}{1 + n \frac{L_g}{\mu}} \frac{1}{L_g}$, for all $i$.
\end{lemma}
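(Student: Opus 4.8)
The plan is to control the three error sequences $\delta_t := \|\bar x_t - x_t^\star\|$, $\varepsilon_t := \|x_t - \mathbf{1}\bar x_t\|$ and $\zeta_t := \|y_t - \mathbf{1}\bar y_t\|$ simultaneously by deriving a coupled system of one-step linear inequalities of the form $z_{t+1} \le A(\alpha)\, z_t + b_t$, where $z_t = (\delta_t, \varepsilon_t, \zeta_t)^\top$, $A(\alpha)$ is an entrywise-nonnegative $3\times 3$ matrix, and the forcing $b_t$ collects the per-step increments $\|x_{t+1}^\star - x_t^\star\|$ and the gradient time-variation. I would then show that for $\alpha$ small enough $\rho(A(\alpha)) < 1$, sum the inequality over $t$, and read off the claimed $O(C_1 + C_2 + C_3 + \mathcal{P}_T + \mathcal{V}_T)$ bound; the constants $C_1, C_2, C_3$ are exactly the $t = 0$ values $\delta_0, \varepsilon_0, \zeta_0$.

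For the diagonal entries I would use the standard structure of gradient tracking. Multiplying \eqref{eq:x_update} by $\tfrac{1}{n}\mathbf{1}^\top$ and using $\mathbf{1}^\top W = \mathbf{1}^\top$ gives $\bar x_{t+1} = \bar x_t - \alpha\bar y_t$ (uniform step), and Lemma~\ref{lem:conserve_y} identifies $\bar y_t = \tfrac{1}{n}\sum_i \nabla f_{i,t}(x_{i,t})$, so the average iterate performs an inexact gradient step on $\tfrac{1}{n}f_t$. Since $f_t$ is $\mu$-strongly convex (Assumption~\ref{assum:StrongConvex}) and $nL_g$-smooth (Assumption~\ref{assum:LSmooth}), Lemma~\ref{lemma:GradConv} applied to $\tfrac{1}{n}f_t$ shows the \emph{exact} step contracts $\delta_t$ by $\lambda = \max(|1-\alpha\mu/n|,\,|1-\alpha L_g|)$, while the inexactness (the gap between $\bar y_t$ and $\tfrac{1}{n}\nabla f_t(\bar x_t)$) is bounded by $\tfrac{\alpha L_g}{\sqrt n}\varepsilon_t$ through smoothness; adding and subtracting $x_t^\star$ and $x_{t+1}^\star$ yields $\delta_{t+1}\le \lambda\delta_t + \tfrac{\alpha L_g}{\sqrt n}\varepsilon_t + \|x_{t+1}^\star - x_t^\star\|$. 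For the consensus errors I would project with $W - \tfrac{1}{n}\mathbf{1}\mathbf{1}^\top$, whose norm is $\sigma_W$ by Assumption~\ref{assum:doubly_stochastic}: from \eqref{eq:x_update} this gives $\varepsilon_{t+1}\le \sigma_W\varepsilon_t + \alpha\sigma_W\zeta_t$, and from \eqref{eq:y_update} it gives $\zeta_{t+1}\le \sigma_W\zeta_t + \|g_{t+1} - g_t\|$.

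The delicate step is the $\zeta$-recursion. I would split $g_{t+1}(x_{t+1}) - g_t(x_t)$ into a state-change part $g_{t+1}(x_{t+1}) - g_{t+1}(x_t)$, controlled by $L_g\|x_{t+1}-x_t\|$ via Assumption~\ref{assum:LSmooth}, and a time-variation part whose accumulated norm is what contributes $\mathcal{V}_T$ to $b_t$. Expanding $x_{t+1} - x_t$ from \eqref{eq:x_update} and using $\|\bar y_t\|\le L_g\delta_t + \tfrac{L_g}{\sqrt n}\varepsilon_t$ (again by smoothness, since $\nabla f_t(x_t^\star)=0$) shows that $\|x_{t+1}-x_t\|$ is bounded by a combination of $\delta_t, \varepsilon_t, \zeta_t$; crucially the coefficient of $\varepsilon_t$ is $O(1)$ (of size roughly $L_g(1+\sigma_W)$) rather than $O(\alpha)$, so the $\zeta$-row couples back to all three errors and cannot be decoupled. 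This feedback is precisely why a small step size is required: it is the coupling, not the individual contractions, that can destroy stability.

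It remains to certify $\rho(A(\alpha)) < 1$ and extract the explicit threshold. Since $A(\alpha)$ is nonnegative, I would use the fact that $\rho(A(\alpha)) < 1$ is equivalent to $I - A(\alpha)$ being a nonsingular M-matrix, i.e. to the positivity of all leading principal minors of $I - A(\alpha)$. The first two minors are $1 - \lambda > 0$ and $(1-\lambda)(1-\sigma_W) > 0$, which hold for any admissible $\alpha$; the binding constraint is $\det(I - A(\alpha)) > 0$. Keeping the dominant terms (the descent margin $1-\lambda \approx \alpha\mu/n$ against the $\sigma_W$- and $L_g$-weighted feedback around the $\varepsilon$--$\zeta$ cycle) reduces this determinant inequality to a scalar condition on $\alpha$ that, in the uniform case, is solved by $\alpha < \tfrac{1-\sigma_W^2}{1-\sigma_W^2 + 2\sigma_W}\tfrac{1}{1+nL_g/\mu}\tfrac{1}{L_g}$. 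With $\rho(A(\alpha)) < 1$ fixed, $(I-A(\alpha))^{-1}$ is entrywise nonnegative, so summing $z_{t+1}\le A(\alpha)z_t + b_t$ over $t$ gives $\sum_{t=0}^T z_t \le (I-A(\alpha))^{-1}\big(z_0 + \sum_{t=0}^T b_t\big)$; since $\sum_t b_t$ is $O(\mathcal{P}_T + \mathcal{V}_T)$ and $z_0 = (C_1, C_2, C_3)^\top$, each of the three sums is $O(C_1 + C_2 + C_3 + \mathcal{P}_T + \mathcal{V}_T)$, as claimed. I expect the determinant/threshold computation to be the main obstacle, together with getting the $O(1)$ feedback coefficient in the $\zeta$-row exactly right.
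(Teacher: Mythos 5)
Your proposal follows essentially the same route as the paper: the same three error quantities, the same per-step inequalities (the average iterate as an inexact gradient step handled via Lemma~\ref{lemma:GradConv} and Lemma~\ref{lem:conserve_y}, consensus errors contracted through $\|W-\tfrac{1}{n}\mathbf{1}\mathbf{1}^T\|=\sigma_W$, and the split of $g_{t+1}(x_{t+1})-g_t(x_t)$ into a state-change part and a time-variation part that accumulates to $\mathcal{V}_T$), and the same observation that the $O(1)$ coefficient on $\varepsilon_t$ in the $\zeta$-row is what forces the small step size. The only packaging difference is at the end: the paper telescopes the three inequalities first and then finds positive weights $M,N$ (parametrized by $b\in(0,1)$) so that the weighted sum cancels $\sum_t\delta_t$ on the right; you keep the one-step matrix recursion and invoke $\rho(A(\alpha))<1$ via the M-matrix criterion and $(I-A)^{-1}\ge 0$. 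For a nonnegative $A$ these are equivalent certificates, so this is a cosmetic rather than substantive difference.

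Two points need attention. First, the explicit threshold $\alpha_i<\tfrac{1-\sigma_W^2}{1-\sigma_W^2+2\sigma_W}\tfrac{1}{1+nL_g/\mu}\tfrac{1}{L_g}$ is part of the lemma, and your proposal asserts that the determinant condition ``is solved by'' it without carrying out the computation; the paper devotes the second half of its proof (equations \eqref{eq:eq_10}--\eqref{eq:step_4}) to exactly this, including the auxiliary constraints $\beta_2,\beta_3>0$ and $\|W-I\|\le 2$. Second, your per-step coefficients do not match the paper's: your $\varepsilon$-row $\varepsilon_{t+1}\le\sigma_W\varepsilon_t+\alpha\sigma_W\zeta_t$ is actually sharper than the paper's \eqref{eq:eq_6} (you can exploit $(W-\tfrac{1}{n}\mathbf{1}\mathbf{1}^T)\mathbf{1}=0$ to replace $\|y_t\|$ by $\|y_t-\mathbf{1}\bar y_t\|$, dropping the $\delta_t$ and extra $\varepsilon_t$ terms), which is legitimate but means the determinant inequality you would obtain is not the paper's, and the quoted constant would not drop out verbatim; you would have to redo the arithmetic to recover (or improve) it. Relatedly, your claim that the first two leading principal minors of $I-A(\alpha)$ are positive ``for any admissible $\alpha$'' is only true because of your sharpened $\varepsilon$-row; with the paper's coefficients the diagonal entries $\sigma_W(1+L_g\|\alpha\|)$ and $\sigma_W+L_g\|\alpha\|$ already impose $\alpha<(1-\sigma_W)/L_g$. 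These are fixable, but as written the quantitative half of the lemma is not proved.
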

\begin{proof}
	The first step is to show the contraction of $\|\bar{x}_t - x_t^\star\|$,  $\|x_t - \mathbf{1}\bar{x}_t\|$ and $\|y_t - \mathbf{1}\bar{y}_t\|$ over time. According to \eqref{eq:x_update}, we have that
	\begin{equation}
	\label{eq:eq_1}
		\begin{split}
		& \|\bar{x}_{t+1} - x_{t+1}^\star\| = \|\frac{1}{n}\mathbf{1}^Tx_{t+1} - x_{t+1}^\star\| \\
		&= \|\frac{1}{n}\mathbf{1}^TW(x_t - \alpha y_t) - x_t^\star + x_t^\star - x_{t+1}^\star\| \\
		&\leq \|\bar{x}_t - \frac{1}{n}\mathbf{1}^T\alpha y_t - x_{t}^\star\| + \|x_t^\star - x_{t+1}^\star\|.
		\end{split}
	\end{equation}
	Denote $\bar{y}_t = \frac{1}{n}\mathbf{1}^T y_t$, $\bar{g}_t  = \frac{1}{n}\mathbf{1}^T g_t(\mathbf{1}\bar{x}_t)$ and $\alpha' = \frac{1}{n}\mathbf{1}^T\alpha \mathbf{1}$. Adding and subtracting $\alpha' \bar{g}_t$ and $\alpha' \bar{y}_t$ in $\|\bar{x}_t - \frac{1}{n}\mathbf{1}^T\alpha y_t - x_{t}^\star\|$ in \eqref{eq:eq_1} and using the triangle inequality, we have that
	\begin{equation}
		\label{eq:eq_2}
		\begin{split}
		\| \bar{x}_t - \frac{1}{n}\mathbf{1}^T & \alpha y_t - x_{t}^\star\| \leq  \|\bar{x}_t - \alpha' \bar{g}_t - x_{t}^\star \|  \\
		& +  \alpha'\| \bar{g}_t -  \bar{y}_t\|  +  \frac{\|\mathbf{1}^T\alpha\|}{n}\|y_t - \mathbf{1} \bar{y}_t\|.
		\end{split}
	\end{equation}
	According to Lemma 10 in \cite{qu2018harnessing} and the $\mu-$strong convexity of $f_t(x)$, we have that $ \|\bar{x}_t - \alpha' \bar{g}_t - x_{t}^\star \| \leq (1 - \frac{1}{n}\alpha' \mu) \|\bar{x}_t - x_{t}^\star\|$ if the step size satisfies $\frac{1}{n}\alpha' \in (0, \frac{1}{n L_g})$. In addition, due to Lemma~\ref{lem:conserve_y} and Assumption~\ref{assum:LSmooth}, we have that $\|\bar{g}_t -  \bar{y}_t\| = \frac{1}{n} \|\mathbf{1}^T g_t(\mathbf{1}\bar{x}_t) - \mathbf{1}^T g_t(x_t)\| \leq \frac{L_g}{\sqrt{n}} \|x_t - \mathbf{1}\bar{x}_t\|$. Combining these inequalities with \eqref{eq:eq_1} and \eqref{eq:eq_2}, we have that
	$\|\bar{x}_{t+1} - x_{t+1}^\star\| \leq (1 - \frac{1}{n} \alpha' \mu)\|\bar{x}_t - x_t^\star\| + \alpha' \frac{L_g}{\sqrt{n}}\| x_t - \mathbf{1}\bar{x}_t\| + \frac{\|\mathbf{1}^T\alpha\|}{n}\|y_t - \mathbf{1} \bar{y}_t\| + \|x_t^\star - x_{t+1}^\star\|. $
	Adding both sides of the above inequality from $t = 0$ to $t = T-1$, adding $\|\bar{x}_0 - x_0^\star\|$ to both sides and adding $(1 - \frac{1}{n}\alpha' \mu) \|\bar{x}_T - x_T^\star\|$ on the right hand side, and rearranging the terms, we have that
	\begin{equation}
		\label{eq:eq_3}
		\begin{split}
		& \sum_{t=0}^T\|\bar{x}_t -  x_t^\star\| \leq \frac{C_1}{\beta_1} + \frac{\sqrt{n}L_g}{\mu} 
\sum_{t=0}^T \| x_t - \mathbf{1}\bar{x}_t\| \\ 
& + \frac{\|\mathbf{1}^T\alpha\|}{\alpha' \mu} \sum_{t=0}^T \|y_t - \mathbf{1} \bar{y}_t\| + \frac{n}{\alpha' \mu} \sum_{t=1}^{T} \|x_t^\star - x_{t-1}^\star\|,
		\end{split}
	\end{equation}
	where $C_1 = \|\bar{x}_0 - x_0^\star\|$, $\beta_1 = \frac{1}{n}\alpha' \mu$.
	Next, we bound the term $\sum_{t=0}^T \|x_t - \mathbf{1}\bar{x}_t\|$. According to \eqref{eq:x_update}, we have that
	\begin{equation}
		\label{eq:eq_4}
		\begin{split}
		& \|x_{t+1} - \mathbf{1}\bar{x}_{t+1}\| = \|W(x_t - \alpha y_t) - \frac{1}{n}\mathbf{1}\mathbf{1}^TW(x_t - \alpha y_t)\| \\
		& \leq \|W - \frac{\mathbf{1}\mathbf{1}^T}{n}\|\|x_t - \mathbf{1}\bar{x}_t\| + \|W - \frac{\mathbf{1}\mathbf{1}^T}{n}\|\|\alpha\|\|y_t\|.
		\end{split}
	\end{equation}
	Furthermore, we get that
	\begin{flalign}
	\label{eq:eq_5}
	& \|y_t\| \leq \|y_t - \mathbf{1}\bar{y}_t\| + \|\mathbf{1}\|\|\bar{y}_t\| & \nonumber \\
	& \leq \|y_t - \mathbf{1}\bar{y}_t\| + \|\mathbf{1}\|\|\bar{y}_t - \bar{g}_t\| + \|\mathbf{1}\| \|\bar{g}_t - \frac{1}{n}\mathbf{1}^Tg_t(\mathbf{1}x_t^\star)\| & \nonumber \\
	& \leq \|y_t - \mathbf{1}\bar{y}_t\| + L_g \|x_t - \mathbf{1}\bar{x}_t\| + \sqrt{n} L_g \|\bar{x}_t - x_t^\star\|. &
	\end{flalign}
	The second inequality above is due to the fact that $x_t^\star$ is the optimizer at time $t$ and $\mathbf{1}^Tg_t(\mathbf{1}x_t^\star) = 0$. The last inequality is due to the same reasoning following \eqref{eq:eq_2}.	Recalling that $\|W - \frac{\mathbf{1}\mathbf{1}^T}{n} \| = \sigma_W < 1$, and combining the inequalities \eqref{eq:eq_4} and \eqref{eq:eq_5}, we have that 
	\begin{equation}
		\label{eq:eq_6}
		\begin{split}
		& \|x_{t+1} - \mathbf{1}\bar{x}_{t+1}\| \leq \sigma_W(1 + L_g\|\alpha\|)\|x_t - \mathbf{1}\bar{x}_t\| \\
		&\quad  + \sigma_W\|\alpha\|\|y_t - \mathbf{1}\bar{y}_t\| + \sigma_W \sqrt{n}L_g \|\alpha\| \|\bar{x}_t - x_t^\star\|.
		\end{split}
	\end{equation}
	Next, we bound the network error $\sum_{t=0}^T \|y_t - \mathbf{1}\bar{y}_t\|$. According to \eqref{eq:y_update}, we have that 
	$ \|y_{t+1} - \mathbf{1} \bar{y}_{t+1}\| = \|(W - \frac{1}{n}\mathbf{1}\mathbf{1}^T) (y_t - \mathbf{1} \bar{y}_t) + (I - \frac{1}{n}\mathbf{1}\mathbf{1}^T)(g_{t+1}(x_{t+1}) - g_t(x_t))\| \leq \|W - \frac{1}{n}\mathbf{1}\mathbf{1}^T\| \|y_t - \mathbf{1} \bar{y}_t\| + \|I - \frac{1}{n}\mathbf{1}\mathbf{1}^T\| \|g_{t+1}(x_{t+1}) - g_t(x_t)\|$, 	
	where the last inequality is due to the triangle and the Cauchy-Schwartz inequalities. 	
	Adding and subtracting $g_{t+1}(x_t)$ in the norm $\|g_{t+1}(x_{t+1}) - g_t(x_t)\|$ and using the triangle inequality, we have that
	\begin{flalign}
\label{eq:eq_y_2}
& \|y_{t+1} - \mathbf{1} \bar{y}_{t+1}\| \leq \sigma_W \|y_t - \mathbf{1} \bar{y}_t\| + \|g_{t+1}(x_{t+1})  & \nonumber \\
& - g_{t+1}(x_t)\| + \| g_{t+1}(x_t) - g_t(x_t)\| \leq \sigma_W \|y_t - \mathbf{1} \bar{y}_t\| & \nonumber \\
& + L_g \|x_{t+1} - x_t\| + \|g_{t+1}(x_t) - g_t(x_t)\|. &
\end{flalign}
	Accoring to \eqref{eq:x_update}, we have that $\|x_{t+1} - x_t\| = \|W(x_t - \alpha y_t) - x_t\| = \|(W - I)x_t - W \alpha y_t\| \leq \|W- I\| \|x_t - \mathbf{1}\bar{x}_t\| + \|W\|\|\alpha\|\|y_t\|$. Combining this inequality with \eqref{eq:eq_5} and \eqref{eq:eq_y_2}, and recalling that $\|W\| = 1$, we have that
	\begin{flalign}
	\label{eq:eq_7}
	& \|y_{t+1} - \mathbf{1} \bar{y}_{t+1}\| \leq (\sigma_W + L_g\|\alpha\|)\|y_{t} - \mathbf{1} \bar{y}_{t}\| & \nonumber \\
	& \quad \quad \;\; + L_g(\|W - I\| + L_g\|\alpha\|)\|x_t - \mathbf{1}\bar{x}_t\| &  \\
	& \quad \quad \;\; + \sqrt{n}L_g^2\|\alpha\|\|\bar{x}_t - x_t^\star\| + \|g_{t+1}(x_{t}) - g_t(x_t)\|. & \nonumber
	\end{flalign}
	
	Next, we telescope the inequalities \eqref{eq:eq_6} and \eqref{eq:eq_7} and rearrange terms as how we get \eqref{eq:eq_3}. We can obtain
	\begin{equation}
		\label{eq:eq_8}
		\begin{split}
		\sum_{t=0}^T \|x_t - \mathbf{1}\bar{x}_t\| & \leq \frac{C_2}{\beta_2} + \frac{\sigma_W \|\alpha\|}{\beta_2} \sum_{t=0}^T\|y_t - \mathbf{1}\bar{y}_t\| \\
		& + \frac{\sigma_W \sqrt{n} L_g \|\alpha\|}{\beta_2} \sum_{t=0}^{T} \|\bar{x}_t - x_t^\star\|,
		\end{split}
	\end{equation}
	where $C_2 = \|x_0 - \mathbf{1}\bar{x}_0\|$, $\beta_2 = 1 - \sigma_W(1 + L_g\|\alpha\|)$, and
	\begin{flalign}
	\label{eq:eq_9}
	& \sum_{t=0}^T \|y_t - \mathbf{1}\bar{y}_t\| \leq \frac{C_3}{\beta_3} + \frac{L_g(\|W - I\| + L_g\|\alpha\|)}{\beta_3} & \nonumber \\
	& \quad \quad \quad \sum_{t=0}^T \|x_t - \mathbf{1}\bar{x}_t\|  + \frac{\sqrt{n}L_g^2\|\alpha\|}{\beta_3} \sum_{t=0}^T \|\bar{x}_t - x_t^\star\| & \\
	& \quad \quad \quad + \frac{1}{\beta_3} \sum_{t=1}^{T} \|g_{t}(x_{t-1}) - g_{t-1}(x_{t-1})\|, & \nonumber
	\end{flalign}
	where $C_3 = \|y_0 - \mathbf{1}\bar{y}_0\|$, $\beta_3 = 1 - \sigma_W - L_g\|\alpha\|$. 
	
	Next, we derive the upper bound on the network errors $\sum_{t=0}^T \|x_t - \mathbf{1}\bar{x}_t\|$ and $\sum_{t=0}^T \|y_t - \mathbf{1}\bar{y}_t\|$ using inequalities \eqref{eq:eq_3}, \eqref{eq:eq_8} and \eqref{eq:eq_9}. To achieve this, we scale \eqref{eq:eq_8} and \eqref{eq:eq_9} by positive factors $M$ and $N$ and add up the three inequalities. After rearranging terms, the desired upper bound in Lemma~\ref{lem:bound_by_regularity} is derived if the following conditions are satisfied: the term $\sum_{t=0}^T\|\bar{x}_t -  x_t^\star\|$ on both sides is cancelled, the terms $\sum_{t=0}^T \|x_t - \mathbf{1}\bar{x}_t\|$ and $\sum_{t=0}^T \|y_t - \mathbf{1}\bar{y}_t\|$ appear on the left hand side and the regularity terms $\sum_{t=1}^{T} \|x_t^\star - x_{t-1}^\star\|$ and $\sum_{t=1}^{T} \|g_{t}(x_{t-1}) - g_{t-1}(x_{t-1})\|$ appear on the right hand side. In the following, we show that there always exists step size $\alpha$ so that positive scaling factors $M$ and $N$ can be found to satisfy these conditions. To do so, we let $M$ and $N$ satisfy the following conditions:
	\begin{subequations}
		\label{eq:eq_10}
		\begin{flalign}
		\label{eq:eq_10_1}
		& M \frac{\sigma_W \sqrt{n} L_g \|\alpha\|}{\beta_2} + N \frac{\sqrt{n}L_g^2 \|\alpha\|}{\beta_3} = 1,
		\end{flalign}
		\begin{flalign}
		\label{eq:eq_10_2}
		& M > \frac{\sqrt{n}L_g}{\mu} + N  \frac{L_g(\|W - I\| + L_g\|\alpha\|)}{\beta_3},
		\end{flalign}
		\begin{flalign}
		\label{eq:eq_10_3}
		& N > \frac{\|\mathbf{1}^T\alpha\|}{\alpha'\mu} + M \frac{\sigma_W \|\alpha\|}{\beta_2}.
		\end{flalign}
	\end{subequations} 	
		Moreover, we define $M = \frac{\beta_2}{\sigma_W \sqrt{n} L_g \|\alpha\|} b$ and $N = \frac{\beta_3}{\sqrt{n}L_g^2\|\alpha\|} (1-b)$, where $b \in (0, 1)$. Then, the condition~\eqref{eq:eq_10_1} is automatically satisfied. Plugging the expressions of $M$ and $N$ into inequalities \eqref{eq:eq_10_2} and \eqref{eq:eq_10_3}, we have that
		\begin{subequations}
			\label{eq:eq_11}
		\begin{equation}
			\label{eq:eq_11_1}
			b < \frac{\beta_3 - \sqrt{n}L_g^2\frac{\|\mathbf{1}^T\alpha\|}{\alpha'\mu} \|\alpha\|}{\beta_3 + L_g\|\alpha\|} < 1,
		\end{equation}
		\begin{equation}
			\label{eq:eq_11_2}
			b > \frac{(n\frac{L_g^2}{\mu}+L_g)\|\alpha\| + \|W - I\|}{L_g\|\alpha\| + \|W - I\| + \frac{\beta_2}{\sigma_W}} > 0.
		\end{equation}
		\end{subequations}		
		
		As long as the interval of $b$ defined in \eqref{eq:eq_11} is nonempty, we can use any $b$ in this interval to find the corresponding $M$ and $N$ that satisfy the conditions in \eqref{eq:eq_10}. When $\|\alpha\|$ goes to zero, the upper bound of $b$ in \eqref{eq:eq_11_1} goes to $1$ and the lower bound of $b$ in \eqref{eq:eq_11_2} goes to $\frac{\|W - I\|}{\|W - I\| + \frac{1 - \sigma_W}{\sigma_W}} < 1$. Therefore, there always exists a step size $\alpha$ small enough so that $b$, $M$ and $N$ exist to derive the desired bound on $\sum_{t=0}^T \|x_t - \mathbf{1}\bar{x}_t\|$ and $\sum_{t=0}^T \|y_t - \mathbf{1}\bar{y}_t\|$ in Lemma~\ref{lem:bound_by_regularity}.
		According to the inequality \eqref{eq:eq_3}, using the bound on $\sum_{t=0}^T \|x_t - \mathbf{1}\bar{x}_t\|$ and $\sum_{t=0}^T \|y_t - \mathbf{1}\bar{y}_t\|$, it is simple to see that $\sum_{t=0}^T\|\bar{x}_t -  x_t^\star\|$ is also bounded by the same order.
		
		In the remainder of this proof, we show that if $\alpha_1 = \alpha_2 = \dots = \alpha_n$ and $\alpha_i < \frac{1-\sigma_W^2}{1 - \sigma_W^2 + 2\sigma_W} \frac{1}{1 + n \frac{L_g}{\mu}} \frac{1}{L_g}$ for all $i$, then $\alpha$ satisfies the sufficient decrease requirement in Lemma~\ref{lemma:GradConv}, the contraction requirement in \eqref{eq:eq_6} and \eqref{eq:eq_7}, that is, $\beta_2, \beta_3 > 0$, and the requirement that the interval of $b$ is nonempty in \eqref{eq:eq_11}. To see this, let $\alpha_1 = \dots = \alpha_n = a$. First, we need to choose $a$ such that $\beta_2, \beta_3 > 0$. Plugging $a$ into the expression of $\beta_2$ and $\beta_3$, we have that
		\begin{subequations}
			\label{eq:step}
		$a < \frac{1}{\sigma_W} \frac{1 - \sigma_W}{L_g}$ and $a < \frac{1 - \sigma_W}{L_g}$.
		Since $\sigma_W < 1$, it is sufficent to choose
		\begin{equation}
		\label{eq:step_1}
		a < \frac{1 - \sigma_W}{L_g}
		\end{equation}
		to obtain $\beta_2, \beta_3 > 0$.	
		Moreover, we need to choose $a$ so that the upper bound in \eqref{eq:eq_11_1} is strictly greater than \eqref{eq:eq_11_2}. Because $\alpha_1 = \dots = \alpha_n = a$, we have that $\|\alpha\| = a$ and $\alpha' = \frac{1}{n}\mathbf{1}^T\alpha\mathbf{1} = a$. Plugging these into the bounds in \eqref{eq:eq_11}, we obtain that
		\begin{equation*}
			\frac{(\frac{nL_g^2}{\mu} + L_g)a + \|W - I\|}{L_ga + \frac{\beta_2}{\sigma_W} + \|W - I\|} < \frac{\beta_3 - n \frac{L_g^2}{\mu} a}{\beta_3 + L_ga}.
		\end{equation*}
		By Assumption~\ref{assum:doubly_stochastic}, we have that all eigenvalues of $W$ satisfy $|\lambda(W)| \leq 1$ due to Perron-Frobenius theory. Therefore, $\|W - I\| \leq 2$. Therefore, to satisfy the above inequality, it is sufficient to let 
		\begin{equation*}
			\frac{(\frac{nL_g^2}{\mu} + L_g)a + 2}{L_ga + \frac{\beta_2}{\sigma_W} + 2} < \frac{\beta_3 - n \frac{L_g^2}{\mu} a}{\beta_3 + L_ga} \text{ and} \;\;\; \beta_3 - n \frac{L_g^2}{\mu} a > 0,
		\end{equation*}
		due to the inequality $\frac{p+m}{q+m} > \frac{p}{q}$ for any $p > q > 0$ and $m > 0$. Simplifying the above two inequalities, we obtain the following conditions on $a$
		\begin{equation}
			\label{eq:step_2}
			a < \frac{1-\sigma_W^2}{1 - \sigma_W^2 + 2\sigma_W} \frac{1}{1 + n \frac{L_g}{\mu}} \frac{1}{L_g},
			\end{equation}
			\begin{equation}
			\label{eq:step_3}
			a < (1-\sigma_W) \frac{1}{1 + n \frac{L_g}{\mu}} \frac{1}{L_g}.
			\end{equation}
			In addition, recalling the analysis after \eqref{eq:eq_2}, we have that
			\begin{equation}
			\label{eq:step_4}
			a < \frac{1}{L_g}.
			\end{equation}
		\end{subequations}
		It is simple to see that all the step size conditions in \eqref{eq:step} reduce to the condition \eqref{eq:step_2}.
		The proof is complete.
\end{proof}

\begin{remark}
	Compared to existing methods in distributed online optimization, e.g., \cite{shahrampour2018distributed}, here we remove the dependency of the step size on the horizon $T$. 
	According to the upper bound on the stepsize in Lemma~\ref{lem:bound_by_regularity}, when the network size $n$ becomes larger, or the mixing rate $\sigma_W$ is closer to $1$, (that is, the mixing process is slower,) or $\frac{nL_g}{\mu}$ becomes larger, (that is, the central problem is more ill-conditioned), the step size needs to be more conservative.
\end{remark}


\begin{theorem}
	\label{thm:regret}
		\textnormal{
		Let Assumptions~\ref{assum:lipschitz_f}-\ref{assum:doubly_stochastic} hold. Then, there exist small enough stepsizes $[\alpha_1, \dots, \alpha_n]$ such that the regret $R_T^d$ is of the order $O(C_1 + C_2 + C_3 + \mathcal{P}_T + \mathcal{V}_T)$, where $C_1 = \|\bar{x}_0 - x_0^\star\|$, $C_2 = \|x_0 - \mathbf{1}\bar{x}_0\|$ and $C_3 = \|y_0 - \mathbf{1}\bar{y}_0\|$. Furthermore, if $\alpha_1=...=\alpha_n$, then the step sizes can be chosen as  $\alpha_i < \frac{1-\sigma_W^2}{1 - \sigma_W^2 + 2\sigma_W} \frac{1}{1 + n \frac{L_g}{\mu}} \frac{1}{L_g}$, for all $i$.}
\end{theorem}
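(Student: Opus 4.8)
The plan is to combine the two preceding lemmas directly, since the theorem is essentially their composition. First I would invoke Lemma~\ref{lem:regret_bound}, which under Assumption~\ref{assum:lipschitz_f} bounds the dynamic regret by a constant multiple of the network error $\sum_{t=0}^T \|x_t - \mathbf{1}\bar{x}_t\|$ plus a constant multiple of the tracking error $\sum_{t=0}^T \|\bar{x}_t - x_t^\star\|$, namely $R_T^d \leq \sqrt{n}L_f \sum_{t=0}^T \|x_t - \mathbf{1}\bar{x}_t\| + nL_f \sum_{t=0}^T \|\bar{x}_t - x_t^\star\|$. This reduces the task of bounding $R_T^d$ to bounding these two cumulative error sequences.

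Second, I would appeal to Lemma~\ref{lem:bound_by_regularity}, which under Assumptions~\ref{assum:LSmooth}, \ref{assum:StrongConvex}, and \ref{assum:doubly_stochastic} guarantees that, for sufficiently small step sizes, both $\sum_{t=0}^T \|x_t - \mathbf{1}\bar{x}_t\|$ and $\sum_{t=0}^T \|\bar{x}_t - x_t^\star\|$ (together with the auxiliary sum $\sum_{t=0}^T \|y_t - \mathbf{1}\bar{y}_t\|$) are of order $O(C_1 + C_2 + C_3 + \mathcal{P}_T + \mathcal{V}_T)$. Since the hypotheses of the theorem, Assumptions~\ref{assum:lipschitz_f}--\ref{assum:doubly_stochastic}, include every assumption required by both lemmas, the two statements apply simultaneously; substituting the bounds of Lemma~\ref{lem:bound_by_regularity} into the inequality of Lemma~\ref{lem:regret_bound} shows that $R_T^d$ is itself of order $O(C_1 + C_2 + C_3 + \mathcal{P}_T + \mathcal{V}_T)$, with the factors $L_f$, $n$, and $\sqrt{n}$ absorbed into the order notation. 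The explicit step-size condition in the equal-step-size case is then inherited verbatim: the same bound $\alpha_i < \frac{1-\sigma_W^2}{1-\sigma_W^2+2\sigma_W}\frac{1}{1+nL_g/\mu}\frac{1}{L_g}$ that renders the three error sums of the desired order is precisely what the theorem asserts, so nothing new needs to be derived.

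The one point requiring care, rather than a genuine obstacle, is to confirm that the two regularity terms appearing inside the proof of Lemma~\ref{lem:bound_by_regularity}, namely $\sum_{t=1}^T \|x_t^\star - x_{t-1}^\star\|$ and $\sum_{t=1}^T \|g_t(x_{t-1}) - g_{t-1}(x_{t-1})\|$, coincide with the path-length $\mathcal{P}_T$ in \eqref{eqn:PathLength} and the gradient path-length $\mathcal{V}_T$ in \eqref{eqn:GradPathLength}. The first identification is immediate after reindexing. The second is where I would be most careful: it requires matching the consecutive-gradient differences evaluated at a common point $x_{t-1}$ with the definition of $\mathcal{V}_T$, using the equivalence of the $L_\infty$ and $L_2$ norms in finite dimensions to pass between the two conventions. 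Once this identification is in place, the theorem follows immediately with no further estimation, since all of the analytical work, the contraction inequalities \eqref{eq:eq_6} and \eqref{eq:eq_7}, the telescoping that produces \eqref{eq:eq_3}, \eqref{eq:eq_8}, and \eqref{eq:eq_9}, and the construction of the scaling factors $M$ and $N$, has already been carried out in establishing Lemma~\ref{lem:bound_by_regularity}. I therefore do not expect any substantive difficulty in this final step.
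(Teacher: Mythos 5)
Your proposal is correct and follows exactly the paper's own (one-line) proof: the theorem is obtained by substituting the bounds of Lemma~\ref{lem:bound_by_regularity} into the inequality of Lemma~\ref{lem:regret_bound}, with the step-size condition inherited from the latter. Your extra remark about reconciling $\sum_{t=1}^{T}\|g_t(x_{t-1})-g_{t-1}(x_{t-1})\|$ with the $L_\infty$-based definition of $\mathcal{V}_T$ via finite-dimensional norm equivalence is a sensible point of care that the paper leaves implicit.
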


\begin{proof}
	This theorem is a direct result of combining Lemma~\ref{lem:regret_bound} and \ref{lem:bound_by_regularity}.
\end{proof}

We conclude this section by remarking on our assumption that $W$ is doubly stochastic. In \cite{pu2018push}, the matrix $W$ in \eqref{eq:eq_oco_x} and \eqref{eq:eq_oco_y} is replaced by a row and column stochastic matrix respectively.
It is easy to see that our analysis can be adapted to reach a similar conclusion, noting that additional terms should be involved in our bounds related to norm equivalence. We do not pursue this further.

\section{Numerical Simulation}
\label{sec:simulation}

In this section, we validate our theorectical analysis using a distributed tracking example. Furthermore, we compare Algorithm~\ref{alg:d-oco} to the distributed online gradient descent algorithm in \cite{shahrampour2018distributed} which does not utilize the gradient tracking technique.
\begin{figure}
	\centering
	\subfigure[]{
		\includegraphics[width = .7\columnwidth]{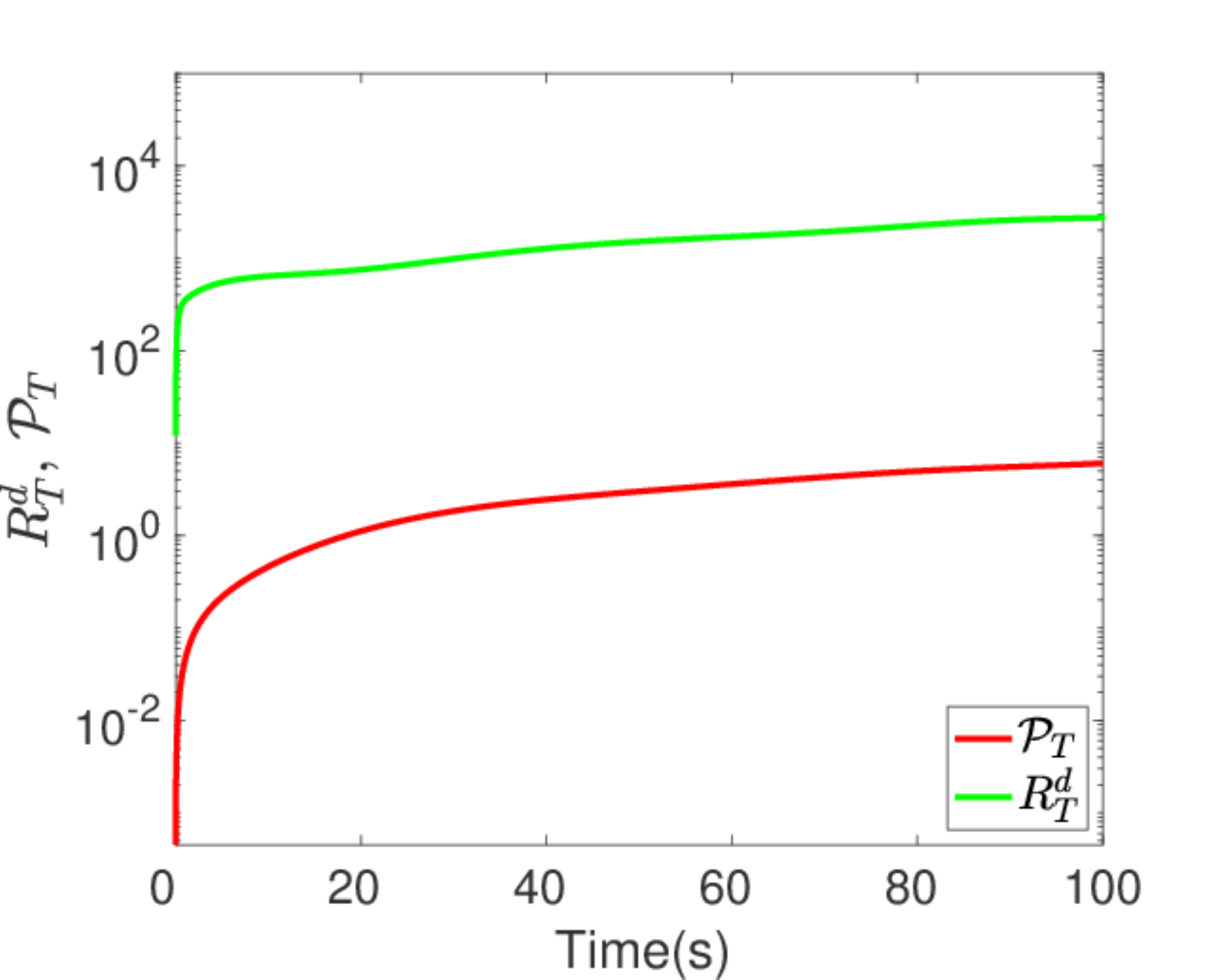}
	}
	\subfigure[]{
		\includegraphics[width = .7\columnwidth]{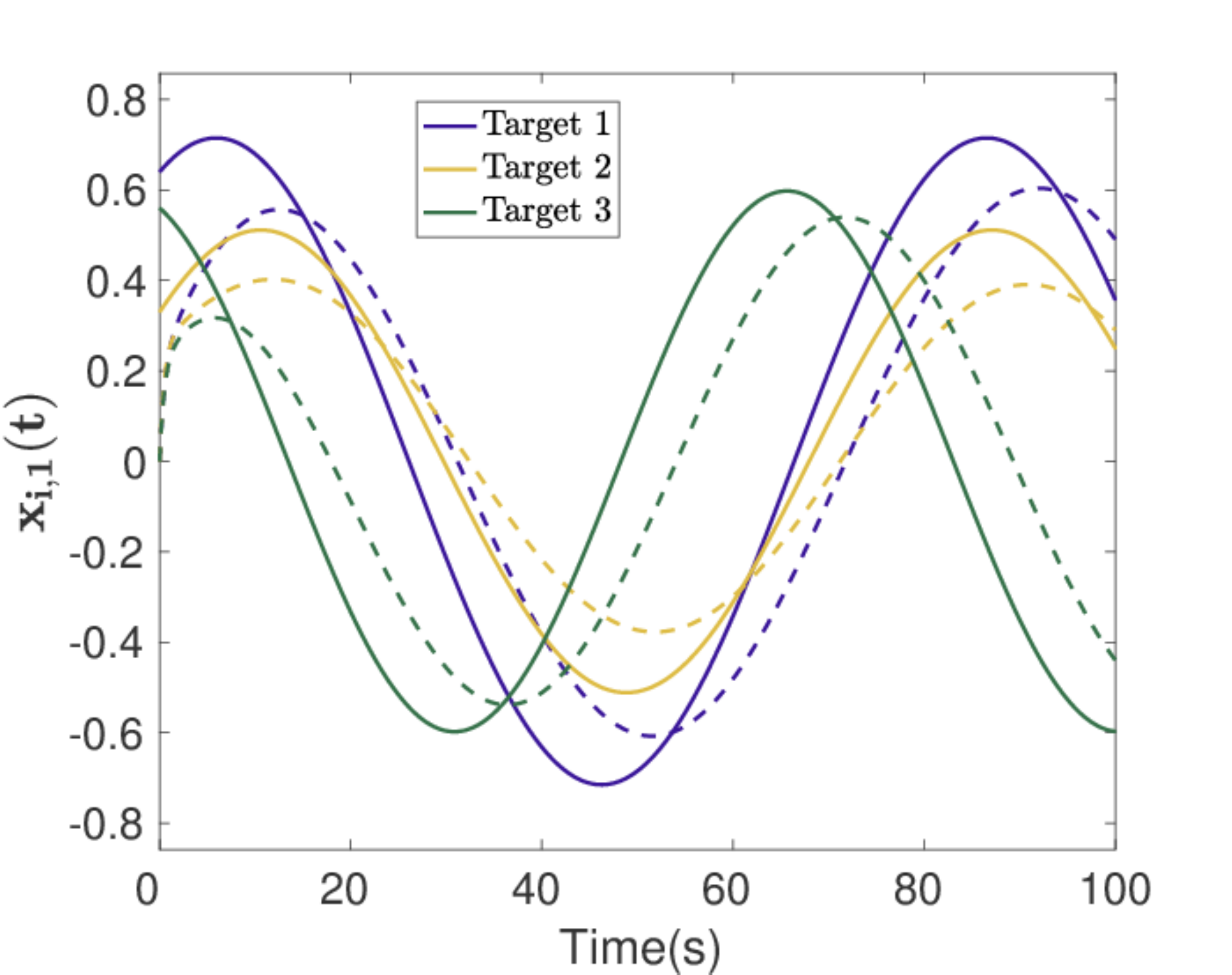}
	}
	\caption{Distributed tracking using sampling interval $0.01s$ and step size $0.003$ for all agents. (a) shows the regret of the solution and the regularity of the optimizer path-length. (b) shows the tracking performance at agent $2$ for three moving targets. The solid curves correspond to the true trajectory of target positions $\{x_{i,1}(t)\}$ over time. The dashed curves correspond to the estimated trajectory solved by Algorithm~\ref{alg:d-oco} at agent 2.}
	\label{fig:theoretical_bound_performance}
	\vspace{-4pt}
\end{figure}
Consider a sensor network of size $n = 10$. These sensors collaborate to track three moving targets. Every target $i$ has a state variable of dimension $2$, i.e. $x_i^\ast(t) = [x_{i,1}^\ast(t), x_{i, 2}^\ast(t)]^T$. The position $x_{i,1}^\ast(t)$ is tracking a sinusoidal curve $x_{i,1}^\ast(t) = A_i \sin(\omega_i t + \phi_i)$ and the velocity satisfies $x_{i,2}^\ast(t) = \dot{x}_{i,1}^\ast(t)$. 
We stack all target states into one vector $x(t)^\ast \in \mathbb{R}^{6}$. Each sensor receives a local observation $y_i(t) \in \mathbb{R}^{4}$, with each entry of $y_i(t)$ being a linear combination of $x(t)^\ast$, $y_{i}(t) = C_i x^\ast(t)$,
where $C_i \in \mathbb{R}^{4 \times 6}$.
To estimate the state variable $x(t)^\ast$, the local optimization problem at each sensor is
\begin{equation*}
	\min_{x} \; f_{i,t}(x) = \frac{1}{2} \|C_ix(t) - y_i(t)\|^2,
\end{equation*}
which is under-determined, and the sensors need to communicate with each other to solve the global optimization problem at each time step
\begin{equation*}
	\min_{x} \; f_t(x) = \frac{1}{2} \|C x(t) - y(t)\|^2,
\end{equation*}
where $C = [C_1^T | \dots | C_{n}^T ]^T$ and $y(t) = [y_1(t)^T, \dots,$ $y_n(t)^T]^T$. When there are enough measurements, the above central problem is strongly convex. 
In this specific example, the gradient-path regularity $\mathcal{V}_T$ in \eqref{eqn:GradPathLength} is of the same order of $\mathcal{P}_T$ in \eqref{eqn:PathLength}. 
To see this, we observe that the norm $\|{\nabla f_{i, t+1}-\nabla f_{i, t}}\|_{\infty} = \|C_i^T(y(t+1) - y(t))\|  \leq \|C_i^TC_i\| \|x(t+1)^\ast - x(t)^\ast\|$. Therefore, in the following, we compare the regret directly with $\mathcal{P}_T$ neglecting $\mathcal{V}_T$.

\begin{figure}[t]
	\centering
	\subfigure[]{
		\includegraphics[width = .7\columnwidth]{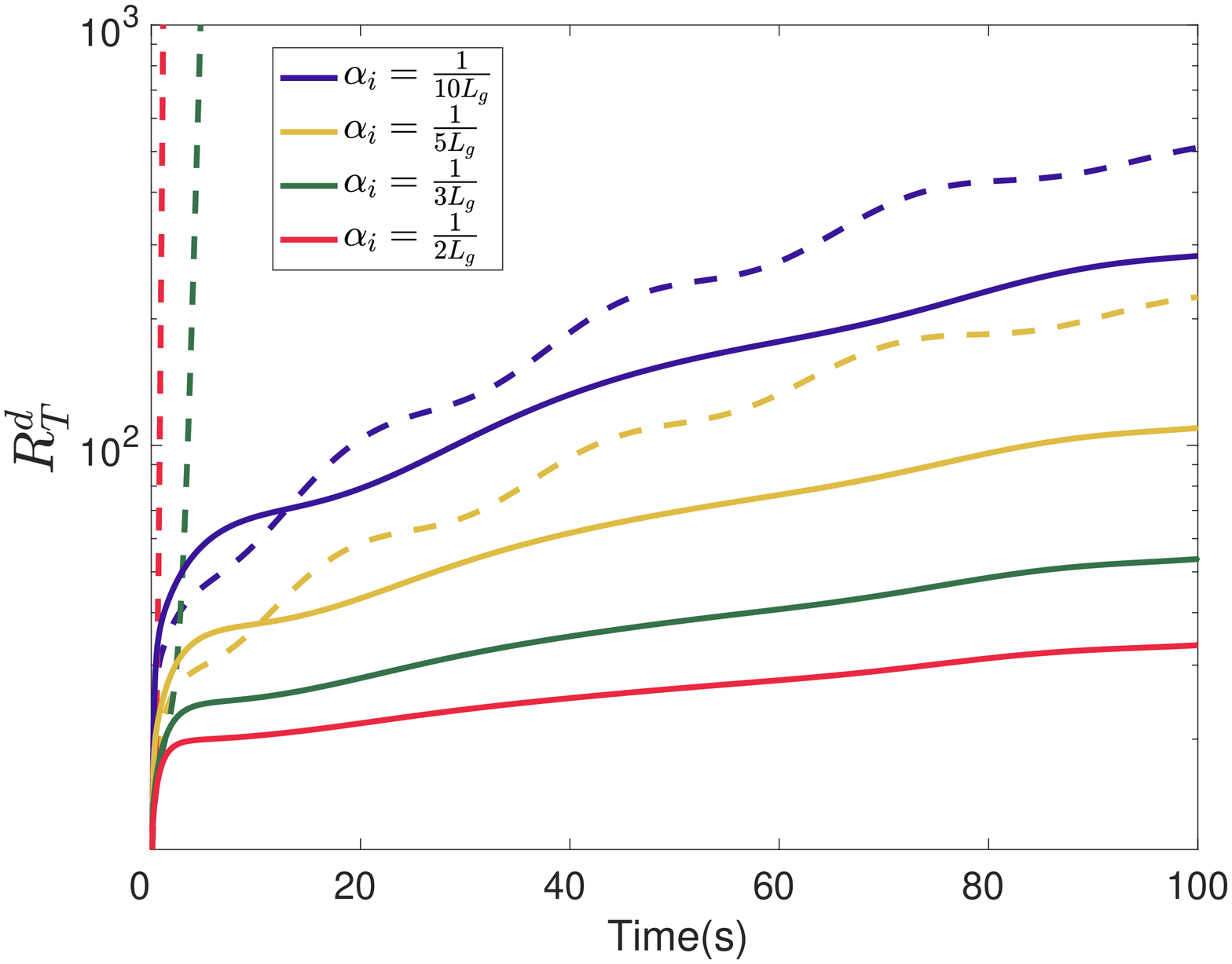}
	}
	\subfigure[]{
		\includegraphics[width = .7\columnwidth]{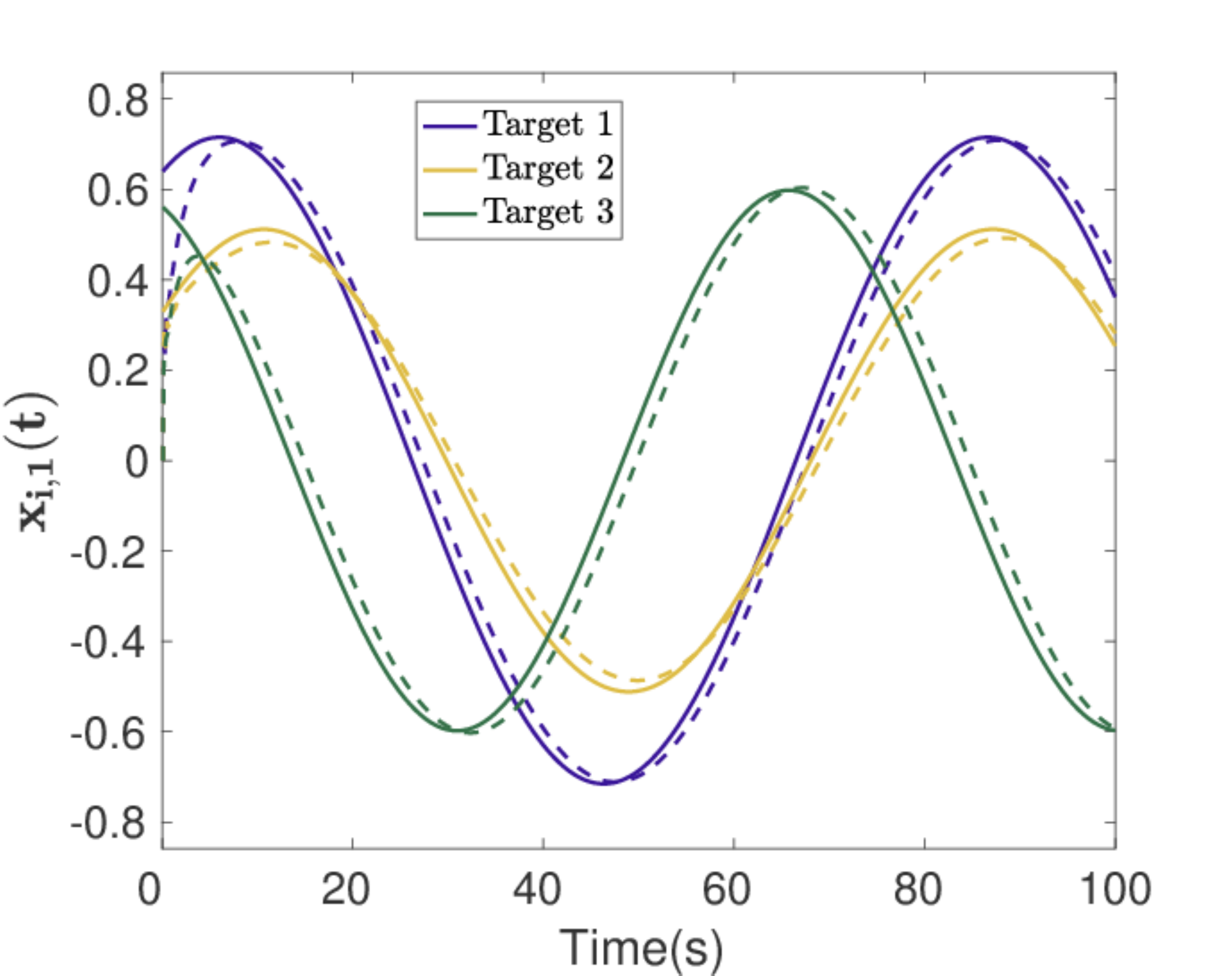}
	}
	\caption{Distributed tracking using sampling interval $0.1s$. (a) shows the comparison between the distributed online optimization with (solid lines) and without gradient tracking (dashed lines) with different step sizes. (b) shows the tracking performance using sampling interval $0.1s$ and the stepsize $\frac{1}{2L_g}$.}
	\label{fig:practical_bound_performance}
	\vspace{-2pt}
\end{figure}


In our simulation, the target parameters $\{A_i, \omega_i, \phi_i\}$ and observation matrices $\{C_i\}$ are randomly generated. 
In addition, the doubly stochastic weight matrix $W$ is randomly generated and its mixing rate is $\sigma_W = 0.59$. 
The stepsizes $\alpha_i$ are selected to be $0.003$ according to the bound given in Theorem~\ref{thm:regret}. Applying Algorithm~\ref{alg:d-oco} with this step size and sampling from the target trajectory at frequency $100$Hz, the regret of solving the online optimization problem and the tracking performance is presented in Figure~\ref{fig:theoretical_bound_performance}. From Figure~\ref{fig:theoretical_bound_performance}(a), we observe that the regret is of the same order as the regularity term $\mathcal{P}_T$, as we have shown in Theorem~\ref{thm:regret}, and the local sensors are tracking the moving targets within their neighborhood.


In what follows, we show that Algorithm~\ref{alg:d-oco} can admit more aggressive stepsizes than the algorithm without gradient tracking \cite{shahrampour2018distributed}. In this way, the achieved regret and tracking performance can be further improved. Specifically, we select the sampling frequency to be $10$Hz, and present the comparison of the achieved regret between our algorithm and the one without gradient tracking in \cite{shahrampour2018distributed} in Figure~\ref{fig:practical_bound_performance}(a). 
We observe that the regret of our algorithm always grows slower than the method in \cite{shahrampour2018distributed} with the same choice of stepsize. Moreover, the method in \cite{shahrampour2018distributed} diverges when the stepsizes are chosen to be $\frac{1}{3L_g}$ or $\frac{1}{2L_g}$ while our algorithm achieves lower regrets.
In Figure~\ref{fig:practical_bound_performance}(b), we demonstrate that such stepsizes can achieve better tracking performance than the one obtained using the proposed theoretical bound.

%
\section{CONCLUSIONS}
\label{sec:conclusion}
In this paper we proposed a new algorithm for distributed online convex optimization based on recent developments in the offline setting. We showed that under the additional assumption of strong convexity and fixed step size, our algorithm can achieve a bound on the dynamic regret without explicit dependence on the number of time steps, in contrast to existing relevant methods. We also illustrated the performance of our algorithm on a distributed tracking example to validate our theoretical results.

\addtolength{\textheight}{-12cm}   

\bibliographystyle{IEEEtran}
\bibliography{biblio}

\end{document}